\newcommand{\oeisurl}[1]{\href{https://oeis.org/#1}{#1}}
\newcommand{\oeis}[1]{\citetalias{OEIS} \oeisurl{#1}}
\renewcommand*{\NAT@nmfmt}[1]{\textsc{#1}}
\newtheorem{theorem}{Theorem}
\newtheorem{proposition}{Proposition}
\theoremstyle{definition}
\newtheorem{definition}{Definition}
\setlist{nosep}
\newcommand{\gen}[1]{\left\langle #1 \right\rangle}
\newcommand{\subl}[1]{\mathcal{L}(#1)}
\newcommand{\simp}[1]{\mathcal{S}(#1)}
\newcommand{\ori}[1]{\left\lfloor #1 \right\rceil}
\newcommand{\latt}[1]{\mathcal{L}\!\gen{#1}}
\begin{document}

\title{Coweight lattice $A^*_n$ and lattice simplices}
\author{Andrey Zabolotskiy \\
{\small \href{mailto:zabolotskiy@phystech.edu}{zabolotskiy@phystech.edu}} \\
{\small Dukhov Automatics Research Institute (VNIIA), 127055 Moscow, Russia}
}
\date{}
\maketitle

\begin{abstract}
There exist as many index-$k$ sublattices of the hexagonal lattice up to isometry as there exist lattice triangles with normalized volume $k$ up to unimodular equivalence, which can be explained using orbifolds. In dimension 3, it was noted that the number of sublattices of the fcc and the bcc lattices and the number of lattice tetrahedra all seem to be the same.
We provide a bijection between the sublattices of the coweight lattice $A^*_n$ and the $n$-dimensional lattice simplices.
It explains, proves, and generalizes the observed coincidences to arbitrary dimension.

\end{abstract}

Sections \ref{sec:subl} and \ref{sec:simp} are introductory: the needed concepts known from literature and our notation for them are introduced. Some coincidences between the number of sublattices and the number of lattice simplices are noted. In Section \ref{sec:bij}, the result is stated and proven: using only elementary methods, we provide a bijection between the sublattices of the $A^*_n$ lattice and the $n$-dimensional lattice simplices that preserves certain equivalence relations, which ultimately explains the said coincidences.

\section{Sublattices}
\label{sec:subl}

The full-rank sublattices of the $n$-dimensional root lattice $A_n$ and its dual, the coweight lattice $A^*_n$, have been studied and counted in different contexts. \cite{SloaneHex} and \cite{Rutherford4} have counted the sublattices of the hexagonal lattice $A_2$ (or $A^*_2$) of any given index. \cite{HananyCountingOrbifolds,HananySublattice,HananyAbelian} have drawn connections between abelian orbifolds, simplical toric diagrams, brane tilings, and the sublattices of certain lattices (in particular, noting that the sublattices of the hexagonal lattice up to isometry and the lattice triangles up to unimodular equivalence both correspond to abelian orbifolds of $\mathbb{C}^3$). \cite{HartForcade} have counted the sublattices of the fcc lattice $A_3$ and the bcc lattice $A^*_3$, obtaining matching numbers. \cite{RRSubl} have related the sublattices of $A_n$ to the discrete Laplacians of finite graphs. Also, \cite{Montagard} studied the connection between regular lattice simplices and the root system $A_n$. An introduction to the lattices $A_n$ and $A^*_n$ is given, e.~g., in \cite{book:perfect}, Sec.~4.2.

A sublattice can be defined by its basis. Columns of coefficients of the basis vectors with respect to the basis of the parent $n$-dimensional lattice can be assembled into an integer $n\times n$ matrix $B$. We denote the sublattice (of some lattice, depending on the context) generated by the basis $B$ as $\subl{B}$. We allow negative orientation of the basis, so the index of the sublattice is given by $|\det B|$. The choice of the basis of a sublattice is apparently ambiguous. The unique representative of the class of basis matrices generating a particular sublattice having a certain canonical form is known as the Hermite normal form (we will not need it though).

Most of the above-mentioned papers deal with the following equivalence relation for sublattices.
\begin{definition}\label{def:s-eq}
We say that two sublattices $\subl{B_1}$ and $\subl{B_2}$ of some parent lattice $\mathcal{L}$ are \emph{isometric}, denoted $\subl{B_1}\sim \subl{B_2}$, if one transforms into another via an isometric automorphism of $\mathcal{L}$.
\end{definition}
For example, the two sublattices of $A_2$ shown in Fig.~\ref{fig:equiv2-s} are isometric because they are related via a $\pi/3$ rotation (or via a certain reflection which is also an isometry of $A_2$).

\begin{figure}
\centering
\includegraphics[width=0.3\columnwidth]{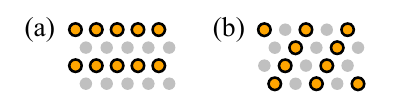}
\caption{Two index-2 sublattices of the hexagonal affine lattice $A_2$, isometric to each other.}
\label{fig:equiv2-s}
\end{figure}

We denote the number of equivalence classes of index-$k$ sublattices of $A_n$ with respect to isometricity by $\beta_{n,k}$ (the isometries respect the index).

The sequence $\{\beta_{2,k}\}_{k}$ is \oeis{A003051}; that entry provides several explicit formulas for this sequence based on the above-referenced papers. The sequence $\{\beta_{3,k}\}_{k}$ is \oeis{A159842}; the explicit formula for this sequence had been derived by \cite{HananySublattice} using Polya's enumeration theorem (it is given for ``the tetrahedral lattice'', which is more commonly known as the diamond crystal structure, consisting of two interpenetrating copies of the $A_3$ lattice; only one copy can contain a sublattice).

There is another commonly used equivalence relation for sublattices.
\begin{definition}\label{def:s-eq-e}
We say that two sublattices $\subl{B_1}$ and $\subl{B_2}$ of some parent lattice $\mathcal{L}$ are \emph{properly isometric}, denoted $\subl{B_1}\sim^+ \subl{B_2}$, if one transforms into another via an orientation-preserving isometric automorphism of $\mathcal{L}$.
\end{definition}
That is, no reflections are allowed in this case. The two sublattices in Fig.~\ref{fig:equiv2-s} are properly isometric, too. We denote the number of index-$k$ sublattices of $A_n$ inequivalent with respect to proper isometricity by $\beta^+_{n,k}$. The sequence $\{\beta^+_{2,k}\}_k$ is \oeis{A145394}.

If we consider affine lattices, their affine sublattices, and their isometric affine automorphisms (i.~e., including translations) instead, the equivalence classes of sublattices defined by properly adjusted definitions \ref{def:s-eq} and \ref{def:s-eq-e} remain the same.
It is possible to consider an equivalence relation that identifies the sublattices related via an arbitrary isometry of the ambient vector space, that is, via non-crystallographic rotations (and maybe reflections), but the problem of counting such equivalence classes is more difficult. It is related to the study of coincidence site lattices, which were counted in $A_4$ by \cite{CSL4D,A4CSL}.

\section{Lattice simplices}
\label{sec:simp}

The studies of the lattice polytopes is an established field [\cite{book:Haase}]. Lattice polytopes are commonly studied up to the unimodular equivalence: two lattice polytopes $\mathcal{S}_1$ and $\mathcal{S}_2$ (or any other geometric entities defined on a lattice) are \emph{unimodularly equivalent}, denoted $\mathcal{S}_1 \cong \mathcal{S}_2$, if they are related via an affine transformation of the ambient space preserving the parent lattice. Lattice polytopes are closely related to toric varieties [\cite{book:toric}].

We will only consider $n$-dimensional lattice simplices, which can be identified with unordered collections of $n+1$ vertices. The particular type of the parent lattice does not matter because no isometries are involved. Without loss of generality, we will always assume that one of the vertices is at the origin. The columns of the coordinates of the other $n$ vertices with respect to the basis of the parent lattice form the integer non-degenerate matrix $T$. We denote the ordered lattice simplex (i.~e., the $(n+1)$-tuple of vertices), which is the convex hull of the vectors given by $T$ together with the origin, by $\simp{T}$, the corresponding oriented lattice simplex (i.~e., the lattice simplex ordered modulo even permutations) as $\ori{\simp{T}}$, and the (unordered, unoriented) lattice simplex formed by these vectors by $|\simp{T}|$.
The volume of that simplex (induced by the parent lattice) is $\frac{1}{n!}|\det T|$; in the terms of \cite{book:Haase}, it has normalized volume (which is the volume induced by the parent lattice times $n!$) $|\det T|$. The unimodular equivalence preserves the volume.

The notion of the unimodular equivalence is applicable to ordered and oriented simplices as well as the unordered, unoriented ones. $\simp{T_1}\cong\simp{T_2} \Rightarrow \ori{\simp{T_1}}\cong\ori{\simp{T_2}} \Rightarrow |\simp{T_1}|\cong|\simp{T_2}|$ for all $T_{1,2}$, but the converse is not true. This motivates the following equivalence relations.
\begin{definition}\label{def:t-eq}
We say that two ordered lattice simplices, $\simp{T_1}$ and $\simp{T_2}$, are \emph{unimodularly equivalent as unordered, unoriented lattice simplices}, denoted $\simp{T_1}\sim \simp{T_2}$, if $|\simp{T_1}|\cong|\simp{T_2}|$.
\end{definition}
For example, the two lattice triangles shown in Fig.~\ref{fig:equiv2-t} are unimodularly equivalent as unordered, unoriented lattice simplices (in fact, the order of their vertices or their orientation is not even chosen and shown).

\begin{figure}
\centering
\includegraphics[width=0.3\columnwidth]{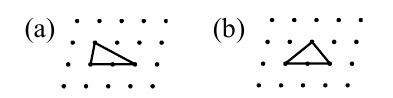}
\caption{Two unimodularly equivalent unordered, unoriented lattice triangles with lattice volume 2.}
\label{fig:equiv2-t}
\end{figure}

This equivalence relation respects the volume. We denote the number of equivalence classes of $n$-dimensional ordered lattice simplices with normalized volume $k$ with respect to this equivalence relation by $\tau_{n,k}$.

This equivalence relation is the same as the one introduced in \cite{HananyCountingOrbifolds} for toric diagrams based on the barycentric coordinates and the ``topological character''.
The sequence $\{\tau_{3,k}\}_k$, which counts lattice tetrahedra, was first computed by J.-O.~Moussafir, and \cite{HartForcade} noted the coincidence with $\{\beta_{3,k}\}_k$.
The sequences $\{\tau_{n,k}\}_k$ for $n=4,5,6$ are \citetalias{OEIS} \oeisurl{A173824}, \oeisurl{A173877}, \oeisurl{A173878}; they were studied in \cite{HananyAbelian} and also computed by \cite{Balletti}.

\cite{book:Karpenkov} studies the lattice geometry, lattice trigonometry, and their relation to the continued fractions; toric geometry is also known to have connections with continued fractions and Hirzebruch--Jung continued fractions [\cite{book:toric}, \S10.2]. In Sec.~6.5 of \cite{book:Karpenkov}, the oriented lattice triangles are counted up to the unimodular equivalence (also called integer congruence). This inspires the following equivalence relation.
\begin{definition}\label{def:t-eq-e}
We say that two ordered lattice simplices, $\simp{T_1}$ and $\simp{T_2}$, are \emph{unimodularly equivalent as oriented lattice simplices}, denoted $\simp{T_1}\sim^+ \simp{T_2}$, if $\ori{\simp{T_1}}\cong\ori{\simp{T_2}}$.
\end{definition}
We denote the number of equivalence classes of $n$-dimensional ordered lattice simplices with normalized volume $k$ with respect to unimodularly equivalence as oriented lattice simplices by $\tau^+_{n,k}$. \cite{book:Karpenkov} gives $\tau^+_{2,k}$ for $k\in\{1,\ldots,20\}$. One can see that they coincide with the corresponding values of $\beta^+_{2,k}$.

\section{Bijection}
\label{sec:bij}

The above-reviewed connections and observations lead to the following conjecture.
\begin{proposition}
$\beta_{n,k}=\tau_{n,k}$ and $\beta^+_{n,k}=\tau^+_{n,k}$ for all $n,k$.
\label{prop:coinc}
\end{proposition}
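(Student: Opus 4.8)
The plan is to construct a single bijection between fully labeled objects and then read off both equalities by quotienting out the relevant symmetry groups. First I would realize the coweight lattice concretely as $A^*_n\cong\mathbb{Z}^{n+1}/\mathbb{Z}\mathbf{1}$, with $\mathbf{1}=(1,\dots,1)$; by the correspondence theorem an index-$k$ sublattice of $A^*_n$ is the same datum as an index-$k$ sublattice $\Lambda\subseteq\mathbb{Z}^{n+1}$ with $\mathbf{1}\in\Lambda$. On the simplex side I would put one vertex at the origin and homogenize: the $(n{+}1)\times(n{+}1)$ matrix $\hat T$ whose columns are $(v_j,1)$ satisfies $|\det\hat T|=k$, and I send $\simp{T}$ to the column lattice $\subl{\hat T^{\mathsf T}}\subseteq\mathbb{Z}^{n+1}$. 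Since every column of $\hat T$ ends in $1$, the last column of $\hat T^{\mathsf T}$ is exactly $\mathbf{1}$, so $\mathbf{1}\in\subl{\hat T^{\mathsf T}}$ and this lattice has index $k$. Because $\mathbf 1$ is primitive it extends to a basis, so transposing back recovers the vertices; this transpose between a homogenized simplex and a $\mathbf 1$-containing sublattice is the heart of the argument and is manifestly invertible.

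Next I would translate the two groups of moves across the transpose. Left multiplication of $\hat T$ by an element of the affine unimodular group $\mathrm{GL}_n(\mathbb{Z})\ltimes\mathbb{Z}^n$ (the linear and translation parts of a unimodular equivalence) becomes right multiplication of $\hat T^{\mathsf T}$, i.e.\ a change of basis of $\subl{\hat T^{\mathsf T}}$, and hence acts trivially on the sublattice; relabeling the vertices permutes the columns of $\hat T$, hence the coordinates of $\mathbb{Z}^{n+1}$, reproducing exactly the $S_{n+1}$ factor of $\mathrm{Aut}(A^*_n)$. Thus ordered simplices modulo unimodular equivalence correspond bijectively to coordinate-labeled index-$k$ sublattices of $A^*_n$, with vertex permutations matching coordinate permutations.

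For the unoriented statement I would then quotient by all permutations. Since $\mathrm{Aut}(A^*_n)=S_{n+1}\times\{\pm\mathrm{id}\}$ for $n\ge2$ and any sublattice is a subgroup, so that $-\Lambda=\Lambda$, the central inversion acts trivially on sublattices; hence isometry classes of sublattices are precisely $S_{n+1}$-orbits, which under the bijection are exactly the unordered, unoriented simplices. This gives the $A^*_n$ version of $\beta_{n,k}=\tau_{n,k}$, and a short duality (Burnside) argument comparing $g$-invariant sublattices of the dual modules $A_n$ and $A^*_n$ identifies the two sublattice counts, yielding the statement for $A_n$.

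The oriented statement is where I expect the real work to lie. The clean idea is to track a sign: an isometry $(\sigma,\epsilon)\in S_{n+1}\times\{\pm\mathrm{id}\}$ acts on the spanning hyperplane with determinant $\mathrm{sgn}(\sigma)\,\epsilon^{\,n}$, and under the bijection this sign is exactly the factor by which the corresponding operation on the simplex (relabel by $\sigma$, together with the central inversion $v\mapsto\epsilon v$) changes the orientation $\ori{\simp{T}}$; so orientation-preserving isometries should match orientation-respecting moves and even permutations should match the even reorderings defining $\ori{\simp{T}}$. The hard part is that for odd $n$ the central inversion $-\mathrm{id}$ is orientation-reversing yet fixes every sublattice, so naively it would let an odd permutation be realized by a proper isometry and collapse the two orientations of a chiral simplex. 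Handling this correctly --- pinning down the exact sense in which orientation is carried through the transpose, and treating the role of $-\mathrm{id}$ and the parity of $n$ (which is precisely what makes the root lattice $A_n$ and its dual $A^*_n$ behave differently here) --- is the main obstacle, and is what must be executed with care to obtain $\beta^+_{n,k}=\tau^+_{n,k}$ for every $n$.
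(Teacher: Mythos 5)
Your unoriented argument is essentially the paper's own proof written in homogeneous coordinates: the paper realizes the vertex reorderings of $\simp{T}$ and the $\mathfrak{S}_{n+1}$ of isometries of $A^*_n$ by one and the same matrix group $\mathfrak{P}_n$ (acting on the right of $T$ and on the left of $B$, respectively), so that transposition $B\mapsto B^{\mathrm{T}}$ is the bijection; your passage to $(n+1)\times(n+1)$ matrices via $A^*_n\cong\mathbb{Z}^{n+1}/\mathbb{Z}\mathbf{1}$ is the same map with the $\mathfrak{S}_{n+1}$-action turned into literal coordinate permutations, which is arguably cleaner. The duality step back from $A^*_n$ to $A_n$ is only gestured at, but the Burnside comparison you indicate is exactly what the paper does, so that is a presentational rather than a mathematical gap.

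The genuine gap is the one you name yourself: the oriented half, $\beta^+_{n,k}=\tau^+_{n,k}$, is never proved, and the obstacle you identify for odd $n$ is real rather than something to be ``executed with care.'' For odd $n$ the inversion $-\mathrm{id}$ is orientation-reversing and fixes every sublattice, so for any isometry $g$ of $A^*_n$ one of $g$, $-g$ is proper and both act identically on sublattices; hence on the $A^*_n$ side proper-isometry classes coincide with isometry classes. On the simplex side, however, an oriented simplex is inequivalent to its mirror whenever the underlying simplex admits no odd affine unimodular symmetry, and such simplices exist: for $n=3$, $k=13$, the sublattice $\{x\in\mathbb{Z}^4: x_1+2x_2+4x_3+6x_4\equiv 0 \pmod{13}\}/\mathbb{Z}\mathbf{1}$ contains $\mathbf{1}$ and has trivial stabilizer in $\mathfrak{S}_4$ (a stabilizing permutation would force a unit $u$ with $u^4=1$ whose multiplication permutes $\{1,2,4,6\}$ modulo $13$, and $u\in\{5,8,12\}$ all fail), so the corresponding tetrahedron splits into two oriented classes while the corresponding sublattices form a single proper-isometry class. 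Be aware that the paper's proof does not resolve this either: it asserts that the orientation-preserving automorphisms of $A^*_n$ are the matrices of $\mathfrak{P}_n$ with determinant $1$, which for odd $n$ omits exactly the proper isometries $-R$ with $R\in\mathfrak{P}_n$, $\det R=-1$ that you are worried about. So the oriented statement cannot be rescued by sharpening the transpose argument alone; it needs either a restriction on the parity of $n$ or a genuinely different treatment.
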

We will prove it using only elementary methods.

We need the following theorem.
\begin{theorem}
There is a bijection between the ordered lattice simplices of lattice volume $k$ on an $n$-dimensional lattice and the bases of index-$k$ sublattices of the coweight lattice $A^*_n$. That bijection maps
\begin{itemize}
\item bases generating the same sublattices to unimodularly equivalent ordered lattice simplices and vice versa,
\item bases generating properly isometric sublattices to lattice simplices unimodularly equivalent as oriented lattice simplices and vice versa,
\item bases generating isometric sublattices to lattice simplices unimodularly equivalent as unordered, unoriented lattice simplices and vice versa.
\end{itemize}
\label{thm:bij}
\end{theorem}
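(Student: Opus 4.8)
The plan is to route both sides through a single intermediate object and read off the three equivalences from how a common symmetry group acts on it. Realize the coweight lattice as $A^*_n\cong\mathbb{Z}^{n+1}/\mathbb{Z}\mathbf{1}$, where $\mathbf 1=(1,\dots,1)$ and $q\colon\mathbb{Z}^{n+1}\to A^*_n$ is the quotient map; write $\bar e_i=q(e_i)$, so that $\bar e_1,\dots,\bar e_n$ is a basis while $\bar e_0+\dots+\bar e_n=0$. In this model $\mathrm{Aut}(A^*_n)=S_{n+1}\times\{\pm1\}$ acts by permuting the coordinates of $\mathbb{Z}^{n+1}$ and by global negation, and a short computation gives the determinants $\det(\sigma)=\operatorname{sgn}\sigma$ and $\det(-1)=(-1)^n$ on the rank-$n$ lattice. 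The intermediate object I would use is the set $\mathcal{D}_{n,k}$ of isomorphism classes of pairs $(G,(g_0,\dots,g_n))$, where $G$ is a finite abelian group of order $k$ and $g_0,\dots,g_n$ generate $G$ with $g_0+\dots+g_n=0$. I expect the bijection of the theorem to be, at the level of matrices, simply the transpose $B=T^{\mathsf T}$, with the content hidden in matching the equivalence relations through $\mathcal{D}_{n,k}$.

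First I would set up the sublattice side, which is immediate: an index-$k$ sublattice $\subl{B}$ is the same as a surjection $A^*_n\twoheadrightarrow G$ onto a group of order $k$, and sending it to $(G,(q\text{-images of }\bar e_i))$ gives a bijection between index-$k$ sublattices and $\mathcal{D}_{n,k}$ (the tuple determines the surjection because $\sum\bar e_i=0$ forces well-definedness, and its kernel recovers the sublattice). Next I would treat the simplex side. Homogenize $\simp{T}$ to the $(n{+}1)\times(n{+}1)$ matrix $M$ whose columns are the vertices with a leading $1$, so that vertex permutations are column permutations of $M$ and unimodular equivalences are left multiplications fixing the all-ones row. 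Passing to the dual lattice $\hat\Lambda^{*}=(M^{-1})^{\mathsf T}\mathbb{Z}^{n+1}\supseteq\mathbb{Z}^{n+1}$ of index $k$, the all-ones row of $M$ becomes the relation that the dual generators sum to $e_0\in\mathbb{Z}^{n+1}$; hence in $G=\hat\Lambda^{*}/\mathbb{Z}^{n+1}$ their images sum to zero, producing a point of $\mathcal{D}_{n,k}$. I would then check this is well defined on unimodular-equivalence classes (a unimodular map acts on the data by an isomorphism of $G$) and that it is a bijection between ordered simplices modulo unimodular equivalence and $\mathcal{D}_{n,k}$, with an explicit inverse reconstructing $M$ from the data. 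Composing the two bijections recovers $B=T^{\mathsf T}$, which already proves the first bullet, since transposition interchanges the right $GL_n(\mathbb{Z})$-action (change of sublattice basis) with the left $GL_n(\mathbb{Z})$-action (affine unimodular equivalence of ordered simplices).

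The remaining work is to match the group actions. On $\mathcal{D}_{n,k}$ the group $S_{n+1}$ permutes the tuple $(g_i)$ and negation sends it to $(-g_i)$; the latter is realized by the automorphism $-1$ of $G$ and therefore acts trivially on $\mathcal{D}_{n,k}$. Under the sublattice bijection an isometry $(\sigma,\varepsilon)$ acts by permuting the tuple by $\sigma$, and under the simplex bijection a vertex permutation $\sigma$ does the same, so both sides carry the same residual $S_{n+1}$-action; this immediately yields the third bullet ($\beta_{n,k}=\tau_{n,k}$), since full isometry and arbitrary reordering both descend to $S_{n+1}$-orbits.

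The delicate point, and the crux of the second bullet, is orientation, and this is where I expect the real difficulty. Equivalence as oriented simplices allows only even reorderings, while the proper-isometry relation allows only $\det=+1$ automorphisms, so I must prove that the correspondence $g\leftrightarrow\pi_g$ between an isometry and its induced vertex permutation satisfies $\det g=\operatorname{sgn}\pi_g$. For coordinate permutations this is the identity $\det\sigma=\operatorname{sgn}\sigma$ noted above, but the subtle case is the inversion $-1$, which is improper exactly when $n$ is odd: I must show it corresponds to a unimodular self-symmetry of every simplex inducing a vertex permutation of sign $(-1)^n$. This is precisely what forces $\beta^{+}_{n,k}=\tau^{+}_{n,k}$ to hold uniformly in the parity of $n$; in particular it shows that for odd $n$ every lattice simplex is achiral and $\tau^{+}_{n,k}=\tau_{n,k}$, mirroring the fact that $-1$ fixes every sublattice so that $\beta^{+}_{n,k}=\beta_{n,k}$. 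Carrying out this sign bookkeeping carefully, together with verifying the reconstruction map on the simplex side, are the two steps I would spend the most care on; the rest is routine linear algebra over $\mathbb{Z}$.
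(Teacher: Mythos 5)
Your route is genuinely different from the paper's: the paper never leaves the matrix level (sublattice basis change is right multiplication by $\mathrm{GL}_n(\mathbb{Z})$, unimodular equivalence of ordered simplices is left multiplication, the isometries of $A^*_n$ and the vertex reorderings are represented by the mutually transposed groups $\mathfrak{P}_n$ and $\mathfrak{P}_n^{\mathrm T}$, and $B\mapsto B^{\mathrm T}$ swaps the two pictures), whereas you route everything through the quotient-group data $(G,(g_0,\dots,g_n))$ --- essentially the abelian-orbifold bookkeeping that the abstract alludes to and the paper deliberately avoids. For the first and third bullets your plan is sound (and, as you note, the first bullet needs none of the machinery: transposition of the $\mathrm{GL}_n(\mathbb{Z})$-actions already does it), modulo writing out the reconstruction of $M$ from the data.

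The gap is in the second bullet, and it is not a bookkeeping gap: the step you flagged as the crux is false. You correctly compute that the inversion acts on $A^*_n$ with determinant $(-1)^n$ and trivially on the set of sublattices, so for odd $n$ the proper-isometry orbits of sublattices coincide with the full isometry orbits and $\beta^+_{n,k}=\beta_{n,k}$; you then conclude that the theorem forces every odd-dimensional lattice simplex to be achiral, i.e.\ to admit a unimodular self-map inducing an odd vertex permutation. It does not. Take $n=3$, $k=7$ and the tetrahedron with vertices $0,e_1,e_2,(1,2,7)$. Its group data is $(\mathbb{Z}_7,(2,6,5,1))$, whose stabilizer in $S_4\times\mathrm{Aut}(\mathbb{Z}_7)$ is $\{(\mathrm{id},1),\,((0\,2)(1\,3),-1)\}$, entirely inside $A_4\times\mathrm{Aut}(\mathbb{Z}_7)$; checking directly, each of the twelve odd permutations of the four vertices forces $Ae_3\notin\mathbb{Z}^3$, while $(0\,2)(1\,3)$ is realized by $\phi(x)=Ax+e_2$ with $Ae_1=(1,1,7)$, $Ae_2=-e_2$, $Ae_3=-e_3$. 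So this tetrahedron is chiral, its two orientations are inequivalent, and $\tau^+_{3,7}>\tau_{3,7}=\beta_{3,7}=\beta^+_{3,7}$; no amount of sign bookkeeping closes the argument. Be aware that the paper's own proof of this bullet rests on the assertion that the proper isometries of $A^*_n$ are represented, modulo basis change, by $\mathfrak{P}_n\cap\mathrm{SL}_n(\mathbb{Z})$; for odd $n$ this omits the proper isometries $-R$ with $\det R=-1$, which act on sublattices exactly as $R$ does. So your analysis has in fact isolated a genuine problem with the second bullet for odd $n$ rather than a defect peculiar to your approach; for even $n$ both your argument and the paper's go through.
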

\begin{proof}
It is known that $\subl{B_1}=\subl{B_2} \Leftrightarrow \exists L\in \mathrm{GL}_n(\mathbb{Z})\colon B_1=B_2L$, i.~e., a basis change is represented in coordinates by the right multiplication by a unimodular integer matrix.

The isometries of the parent lattice are represented by the left multiplication of the sublattice basis matrix by certain matrices from $\mathrm{GL}_n(\mathbb{Z})$. The group of isometric automorphisms of the $A^*_n$ coweight lattice is isomorphic to $\mathfrak{S}_{n+1}\times\mathbb{Z}_2$ where $\mathbb{Z}_2$ corresponds to the space inversion (which is already taken into account since we consider the basis change simultaneously with the isometries) and the full symmetric group $\mathfrak{S}_{n+1}$ corresponds to the reflections and rotations permuting the set of $n$ certain appropriately chosen minimal lattice vectors $\mathbf{e}_1,\ldots,\mathbf{e}_n$ together with the vector $\mathbf{e}_0=-(\mathbf{e}_1+\dots+\mathbf{e}_n)$, also minimal [\cite{book:perfect}]. In the basis of these minimal vectors, these transformations are represented by the set $\mathfrak{P}_n$ of matrices generated by the set of all $n\times n$ permutation matrices, $\Pi_n$, and the matrices $P_{n,i}$ for $i\in\{1,\ldots,n\}$, where their elements are $P_{n,i;j,i}=-1$ and $P_{n,i;j,\ell}=\delta_{j,\ell}$ for $j\in\{1,\ldots,n\}$, $\ell\in\{1,\ldots,n\}\setminus\{i\}$. So $\subl{B_1}\sim\subl{B_2} \Leftrightarrow \exists L\in \mathrm{GL}_n(\mathbb{Z}),R\in\mathfrak{P}_n\colon B_1=RB_2L$; $\subl{\cdot}$ hereafter denotes a sublattice of the $A^*_n$ lattice in the basis of the appropriate minimal vectors $\mathbf{e}_1,\ldots,\mathbf{e}_n$.

The orientation-preserving isometric automorphisms of $A^*_n$ are represented by the matrices from $\mathfrak{P}_n$ with determinant 1. Consequently, $\subl{B_1}\sim^+\subl{B_2} \Leftrightarrow \exists L\in \mathrm{GL}_n(\mathbb{Z}),R\in\mathfrak{P}_n\cap \mathrm{SL}_n(\mathbb{Z})\colon B_1=RB_2L$.

On the other hand, the unimodular equivalence of the ordered lattice simplices is equivalent to the existence of a unimodular matrix relating their coordinates: $\simp{T_1}\cong\simp{T_2} \Leftrightarrow \exists L\in \mathrm{GL}_n(\mathbb{Z})\colon T_1=LT_2$.

Reordering of the vertices of the simplex $|\simp{T}|$ includes the permutations of the $n$ non-origin vertices represented by the right multiplication of the matrix $T$ by the permutation matrices from $\Pi_n$ and translating the simplex so that one of its vertices, $\mathbf{t}_i$, moves to the origin and the former origin vertex takes its place in the tuple of vertices as the new vertex $-\mathbf{t}_i$. These translations are represented by the matrices $P_{n,i}^\mathrm{T}$ where $P_{n,i}$ is defined as above. The matrix group generated by $\Pi_n$ and $\{P_{n_i}^\mathrm{T}\}_i$ is $\mathfrak{P}_n^\mathrm{T}$. So $|\simp{T_1}|=|\simp{T_2}| \Leftrightarrow \exists R\in\mathfrak{P}_n^\mathrm{T}\colon T_1 = T_2R$, and $\simp{T_1}\sim\simp{T_2} \Leftrightarrow \exists L\in \mathrm{GL}_n(\mathbb{Z}), R\in\mathfrak{P}_n^\mathrm{T}\colon T_1 = LT_2R$.

Similarly, since $\mathfrak{P}_n^\mathrm{T}\cap \mathrm{SL}_n(\mathbb{Z})=(\mathfrak{P}_n\cap \mathrm{SL}_n(\mathbb{Z}))^\mathrm{T}$, for the unimodular equivalence as oriented lattice simplices we have: $\simp{T_1}\sim^+\simp{T_2} \Leftrightarrow \exists L\in \mathrm{GL}_n(\mathbb{Z}), R\in(\mathfrak{P}_n\cap \mathrm{SL}_n(\mathbb{Z}))^\mathrm{T}\colon T_1 = LT_2R$.

From the above-given coordinate representations of the equivalence relations of the sublattices and the lattice simplices, it follows that there is a bijection between the bases of the sublattices of $A^*_n$ and the ordered lattice polytopes preserving the equivalence relations defined on them. That bijection is given by the matrix transposition:
\begin{align*}
\subl{B_1}=\subl{B_2} & \Leftrightarrow \simp{B_1^\mathrm{T}}\cong\simp{B_2^\mathrm{T}} \\
\subl{B_1}\sim\subl{B_2} & \Leftrightarrow \simp{B_1^\mathrm{T}}\sim\simp{B_2^\mathrm{T}} \\
\subl{B_1}\sim^+\subl{B_2} & \Leftrightarrow \simp{B_1^\mathrm{T}}\sim^+\simp{B_2^\mathrm{T}}
\end{align*}
\end{proof}
The bijection of Theorem~\ref{thm:bij} in the case $n=2, k=6$ is illustrated in Fig.~\ref{fig:bij}.

\begin{figure}
\centering
\includegraphics[width=\columnwidth]{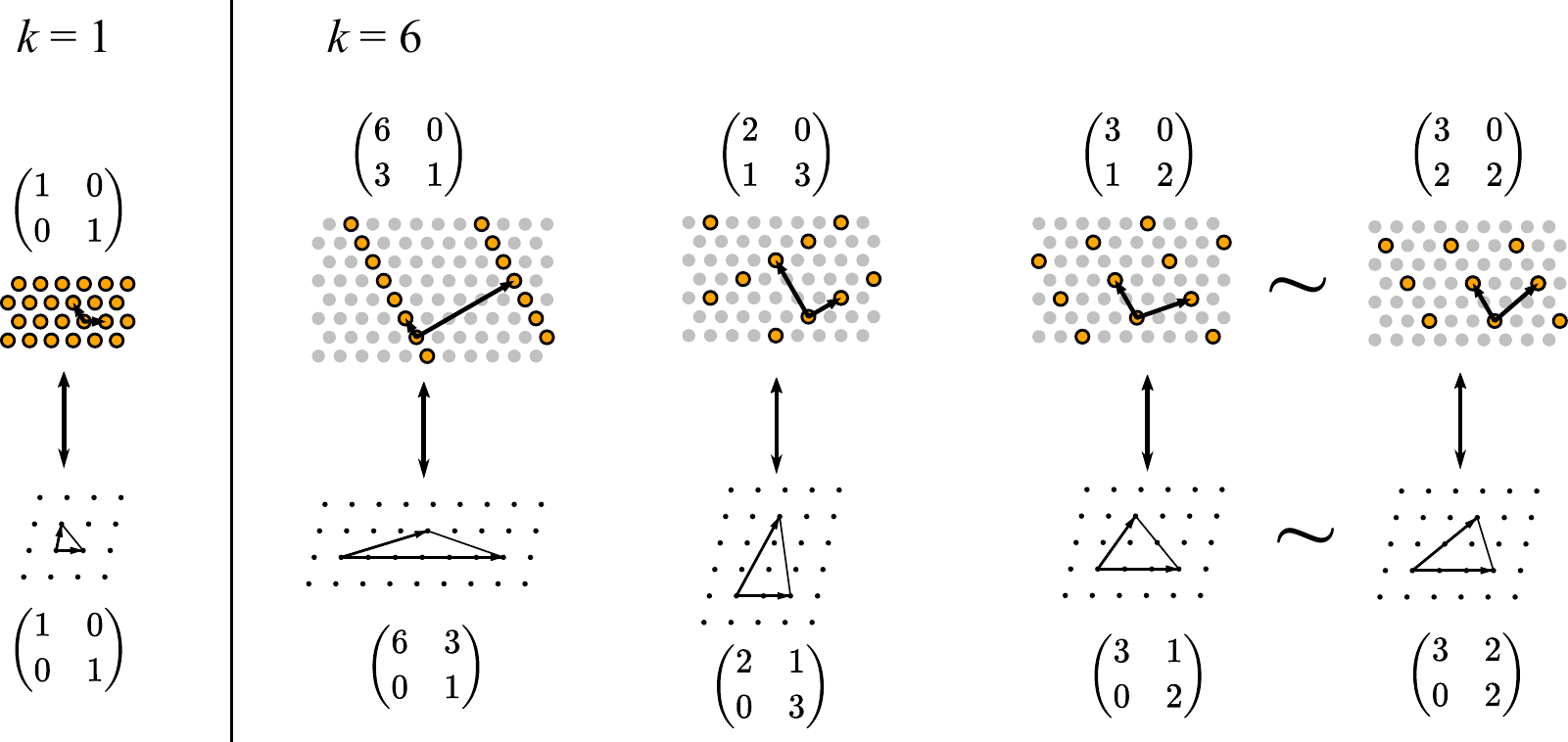}
\caption{
Top: the sublattices $\subl{B}$ of the hexagonal lattice $A^*_2$ (isomorphic to $A_2$), a pair of basis vectors generating each of them, and the matrix $B$ corresponding to that pair of vectors. On the left, the only index-1 sublattice of $A^*_2$ which is the parent lattice itself is shown; on the right, the 4 index-6 sublattices of $A^*_2$ are shown.
The sublattices are mutually non--properly isometric, and there is one pair of isometric sublattices.
Bottom: the lattice triangles $\simp{T}$ corresponding to these sublattices, a pair of vectors that form each of these triangles (their ordering is always in the positive direction), and the matrix $T=B^\mathrm{T}$ corresponding to that pair of vectors.
}
\label{fig:bij}
\end{figure}

To get Proposition~\ref{prop:coinc}, we now only need to prove the following proposition and apply it to the lattice $A_n$. 
\begin{proposition}
The number of mutually non-isometric or non--properly isometric index-$k$ sublattices is the same for a lattice $\mathcal{L}$ and its dual $\mathcal{L}^*$.
\label{prop:dualsubl}
\end{proposition}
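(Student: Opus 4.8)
The plan is to establish a bijection between index-$k$ sublattices of $\mathcal{L}$ and index-$k$ sublattices of $\mathcal{L}^*$ that intertwines the two isometry equivalence relations (and their orientation-preserving versions). The natural candidate is \emph{lattice duality}: to each full-rank sublattice $M \subseteq \mathcal{L}$ one associates its dual lattice $M^*$, and the key observation is that $M^* \supseteq \mathcal{L}^*$ with the same index, i.~e.\ $[\mathcal{L}:M] = [M^*:\mathcal{L}^*]$. This is not quite what we want, since $M^*$ \emph{contains} $\mathcal{L}^*$ rather than being a sublattice of it. The fix is to observe that any isometry of $\mathcal{L}$ is also an isometry of $\mathcal{L}^*$ (the dual lattice has the same automorphism group, acting by the contragredient representation, which for orthogonal transformations is the same representation); thus one can rescale or, more cleanly, use the anti-isomorphism on the level of basis matrices.

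Concretely, I would argue at the level of basis matrices, mirroring the proof of Theorem~\ref{thm:bij}. Fix a basis of $\mathcal{L}$ and the corresponding dual basis of $\mathcal{L}^*$; then the isometric automorphism group of $\mathcal{L}$ is represented by some finite matrix group $G \subset \mathrm{GL}_n(\mathbb{Z})$, and the isometric automorphism group of $\mathcal{L}^*$ in the dual basis is represented by the transpose-inverse group $\{(R^\mathrm{T})^{-1} : R \in G\}$. Since $G$ is a finite group of integer matrices preserving a positive-definite form, it is closed under the transpose-inverse operation up to conjugacy; in fact $(R^\mathrm{T})^{-1} = S^{-1} R S$ for the Gram matrix $S$ of the form. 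The equivalence classes of index-$k$ sublattices of $\mathcal{L}$ are double cosets $G \backslash \{B : |\det B| = k\} / \mathrm{GL}_n(\mathbb{Z})$, and those of $\mathcal{L}^*$ are the analogous double cosets with $G$ replaced by its transpose-inverse. The map $B \mapsto (B^\mathrm{T})^{-1} \cdot k$ (rescaled to stay integral, or handled via cofactor/adjugate matrices $\operatorname{adj}(B) = \det(B)\,B^{-1}$) sends index-$k$ bases to index-$k$ bases and converts left-multiplication by $R$ into left-multiplication by $(R^\mathrm{T})^{-1}$ while preserving the right $\mathrm{GL}_n(\mathbb{Z})$-action, hence descends to a bijection on double cosets.

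The main obstacle I anticipate is handling the integrality and index bookkeeping cleanly: the naive dual $B \mapsto (B^\mathrm{T})^{-1}$ produces a rational matrix, so one must pass to the adjugate $\operatorname{adj}(B)$, verify $|\det \operatorname{adj}(B)| = k^{n-1}$ rather than $k$, and then correct this — for instance by noting that the sublattice generated by $\operatorname{adj}(B)$ has index $k^{n-1}$ in $\mathcal{L}^*$ but contracting/rescaling by the index $k$ brings it back to the correct range, or by using the explicit bijection $M \mapsto (kM^* ) \cap \mathcal{L}^*$. I would verify that this normalization is compatible with the group action on both sides so that isometric (resp.\ properly isometric) classes map to isometric (resp.\ properly isometric) classes. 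Since the orientation of an isometry is detected by $\det R = \pm 1$, and $\det((R^\mathrm{T})^{-1}) = \det(R)$, the orientation-preserving subgroup maps to the orientation-preserving subgroup automatically, so the refinement to $\sim^+$ requires no extra work once the bijection itself is set up correctly.
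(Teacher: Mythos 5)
Your strategy---an explicit duality bijection at the level of basis matrices---runs into a problem that is not mere bookkeeping but is fatal in exactly the interesting range $n\ge 3$. The dual/adjugate map sends an index-$k$ basis $B$ to a matrix of determinant $\pm k^{n-1}$: indeed $\det(\operatorname{adj}(B))=\det(B)^{n-1}$ and $\det\bigl(kB^{-\mathrm{T}}\bigr)=k^{n}/\det(B)$. Equivalently, for a sublattice $M\subseteq\mathcal{L}$ of index $k$ the dual $M^{*}$ is a \emph{super}lattice of $\mathcal{L}^{*}$ of index $k$, and the natural ways of turning it into a sublattice of $\mathcal{L}^{*}$ --- namely $kM^{*}$, which is what your adjugate generates, or equivalently the annihilator of $M/k\mathcal{L}$ under the pairing $\mathcal{L}/k\mathcal{L}\times\mathcal{L}^{*}/k\mathcal{L}^{*}\to\frac{1}{k}\mathbb{Z}/\mathbb{Z}$ --- all have index $k^{n-1}$, not $k$. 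Neither of your proposed fixes repairs this: $(kM^{*})\cap\mathcal{L}^{*}=kM^{*}$ because $kM^{*}$ already lies inside $\mathcal{L}^{*}$, and ``rescaling by $k$'' just returns you to the superlattice $M^{*}$. So what you actually construct is an equivariant injection from the index-$k$ sublattices of $\mathcal{L}$ into the index-$k^{n-1}$ sublattices of $\mathcal{L}^{*}$ (and not onto them), which is not the bijection the proposition needs. For $n=2$ the indices happen to agree, but in two dimensions every lattice is similar to its dual and the statement is easy anyway. A second, related gap: your identity $(R^{\mathrm{T}})^{-1}=S^{-1}RS$ with $S$ the Gram matrix is conjugacy over $\mathbb{R}$, not over $\mathbb{Z}$; $S$ is generally not unimodular (e.g.\ for $A_3$, where fcc and bcc are genuinely dissimilar lattices), so it does not identify the two double-coset spaces.

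The paper avoids this dead end entirely: it constructs no bijection between sublattices of $\mathcal{L}$ and of $\mathcal{L}^{*}$, but instead counts orbits on both sides with Burnside's lemma. Since $\mathcal{L}$ and $\mathcal{L}^{*}$ have the same group of isometric automorphisms, it suffices to show that each individual isometry fixes equally many index-$k$ sublattices of $\mathcal{L}$ as of $\mathcal{L}^{*}$; the paper does this by exhibiting bases of $\mathcal{L}$ and of $\mathcal{L}^{*}$ in which the given isometry is represented by the same integer matrix, after which the two fixed-point counts agree tautologically. If you want to rescue your write-up, redirect the effort there: the question to answer is not ``how do I dualize a sublattice while keeping the index'' but ``why does $R$ fix as many index-$k$ sublattices as $R^{-\mathrm{T}}$ does''. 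Your observation that $\det(R^{-\mathrm{T}})=\det R$ remains the right ingredient for extending whatever argument you settle on to the properly-isometric refinement.
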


There exists a canonical bijection between the sublattices of a lattice $\mathcal{L}$ and the superlattices of its dual $\mathcal{L}^*$, but it is of no use here. The automorphism groups of $\mathcal{L}$ and $\mathcal{L}^*$ are the same. So if we invoke Burnside's lemma to count the non-isometric sublattices (as is done by \cite{Rutherford4} and \cite{HananySublattice}), to prove Proposition~\ref{prop:dualsubl}, we only need to show that the number of sublattices of the given index fixed by any particular isometry is the same for $\mathcal{L}$ and $\mathcal{L}^*$. It would follow from the following proposition.
\begin{proposition}
For any isometric automorphism of a lattice $\mathcal{L}$, there exist bases in $\mathcal{L}$ and $\mathcal{L}^*$ such that they are unimodularly transformed by that isometric automorphism in the same way.
\end{proposition}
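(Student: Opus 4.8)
The plan is to fix a $\mathbb{Z}$-basis of $\mathcal{L}$ and let $M\in\mathrm{GL}_n(\mathbb{Z})$ be the matrix by which the given isometric automorphism $\sigma$ (of finite order, since it is a lattice isometry) transforms that basis. First I would pass to the dual basis of $\mathcal{L}^*$ and compute, from the defining relation $\langle\sigma x,\sigma y\rangle=\langle x,y\rangle$, that $\sigma$ transforms the dual basis by the matrix $(M^{-1})^{\mathrm{T}}$. Thus the statement is equivalent to the existence of $P\in\mathrm{GL}_n(\mathbb{Z})$ with $PMP^{-1}=(M^{-1})^{\mathrm{T}}$, i.e.\ to the assertion that $\mathcal{L}$ and $\mathcal{L}^*$ are isomorphic as modules over the group ring $\mathbb{Z}[\sigma]$. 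Changing the two bases independently corresponds exactly to conjugating the two matrices by arbitrary elements of $\mathrm{GL}_n(\mathbb{Z})$, so it is this single conjugacy that must be produced.

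The next step records the elementary content of orthogonality. Writing $G$ for the Gram matrix of the chosen basis, the isometry condition is $M^{\mathrm{T}}GM=G$, which rearranges to $(M^{-1})^{\mathrm{T}}=GMG^{-1}$. Hence $M$ and $(M^{-1})^{\mathrm{T}}$ are already conjugate over $\mathbb{Q}$, by the symmetric positive-definite integer matrix $G$ itself; equivalently, $\mathcal{L}$ and $\mathcal{L}^*$ become isomorphic after tensoring with $\mathbb{Q}$, which is clear because they span the same rational space with the same $\sigma$-action. What remains is to promote this rational conjugacy to an integral one. Concretely one seeks $P\in\mathrm{GL}_n(\mathbb{Z})$ of the form $P=GK$ with $K$ in the rational commutant of $M$; equivalently, one seeks a $\sigma$-invariant \emph{unimodular} integral bilinear form on $\mathcal{L}$, the standard form $G$ being invariant but having determinant equal to the lattice discriminant, in general greater than $1$. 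I would attempt this by a local--global analysis: at every prime $p\nmid\det G$ the form $G$ is already $p$-adically unimodular, so $\mathcal{L}$ and $\mathcal{L}^*$ are isomorphic as $\mathbb{Z}_p[\sigma]$-modules there, and at the finitely many primes dividing $\det G$ one examines the $\mathbb{Z}_p[\sigma]$-module structure directly to exhibit a local isomorphism, placing $\mathcal{L}$ and $\mathcal{L}^*$ in the same genus.

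The hard part will be the final integral step, namely passing from genus to isomorphism: $\mathbb{Z}[\sigma]$-lattices in one genus can in general split into several isomorphism classes, so local isomorphism everywhere does not by itself yield the global $P$. I expect to close this gap using the finiteness and cyclicity of $\langle\sigma\rangle$ together with the self-duality furnished by the invariant form. If a fully general integral conjugacy proves elusive, I would fall back on what the Burnside count in Proposition~\ref{prop:dualsubl} actually consumes: only that $\mathcal{L}$ and $\mathcal{L}^*$ possess equally many $\sigma$-invariant sublattices of each index $k$, equivalently that $M$ and $M^{\mathrm{T}}$ have equally many invariant index-$k$ sublattices. This equality of counting functions is weaker than conjugacy and can be attacked directly through the submodule-counting (local zeta) function, which may coincide for $M$ and $M^{\mathrm{T}}$ even when the two modules are not isomorphic.
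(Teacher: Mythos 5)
Your opening reduction is exactly right, and it is the most clear-sighted part of the proposal: choosing a basis of $\mathcal{L}$ and the dual basis of $\mathcal{L}^*$, the isometry acts by $M$ and by $M^{-\mathrm{T}}$ respectively, so the proposition is precisely the claim that $M$ and $M^{-\mathrm{T}}$ are conjugate in $\mathrm{GL}_n(\mathbb{Z})$, and the Gram identity $M^{\mathrm{T}}GM=G$ gives the conjugacy over $\mathbb{Q}$ for free. But the proposal then stops exactly where the content begins. Rational, or even everywhere-local, conjugacy of integer matrices does not imply $\mathrm{GL}_n(\mathbb{Z})$-conjugacy (this is the Latimer--MacDuffee phenomenon; in your language, a genus of $\mathbb{Z}[\sigma]$-lattices can contain several isomorphism classes), and you explicitly leave the genus-to-isomorphism step open. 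As written this is therefore a plan with an acknowledged hole at the decisive step, not a proof. The fallback you mention --- proving only that $M$ and $M^{\mathrm{T}}$ have equally many invariant sublattices of each index, which is indeed all that the Burnside count in Proposition~\ref{prop:dualsubl} consumes --- is a sensible retreat, but it too is only gestured at, not carried out.

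For comparison, the paper's own proof is a short explicit construction: from $R_oB=BR$ and $R_oB^{-\mathrm{T}}=B^{-\mathrm{T}}R'$ it takes the dual basis $B'=B^{-\mathrm{T}}L$ with $L=(R')^{-1}R$. You should be aware, however, that verifying $R_oB'=B'R$ reduces to the condition $R'L=LR$, i.e.\ to $L$ conjugating $R'$ into $R$, and the stated choice $L=(R')^{-1}R$ gives $L^{-1}R'L=R^{-1}R'R$, which equals $R$ only when $R'=R$. So the difficulty you isolate is not an artifact of your roundabout route: the existence of an $L\in\mathrm{GL}_n(\mathbb{Z})$ with $L^{-1}R'L=R$, where $R'=R^{-\mathrm{T}}$, is genuinely what has to be produced, and neither your sketch nor the paper's one-line formula, taken literally, supplies it. If you want to complete the argument, I would suggest either establishing the conjugacy concretely for the lattices $A_n$ and $A_n^*$ that Proposition~\ref{prop:coinc} actually requires, where the automorphism group $\mathfrak{S}_{n+1}\times\mathbb{Z}_2$ and its two integral representations can be compared by hand, or proving the weaker counting identity for invariant sublattices directly, since that suffices for the application.
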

\begin{proof}
Let $\mathcal{L}=\latt{B}$ be an $n$-dimensional lattice generated by some basis vectors (written in the orthonormal basis of the ambient space $\mathbb{R}^n$) forming the matrix $B\in \mathrm{GL}_n(\mathbb{R})$ (we can assume that the lattice is unimodular without loss of generality). Then $\latt{B}^*=\latt{B^{-\mathrm{T}}}$ where $B^{-\mathrm{T}}\equiv (B^\mathrm{T})^{-1}$. An isometric automorphism of $\latt{B}$ is an orthogonal transformation of the coordinates $R_o\in \mathrm{O}_n(\mathbb{R})$ such that $\latt{R_oB}=\latt{B}$, which means that $R_oB=BR$ for some $R\in \mathrm{GL}_n(\mathbb{Z})$, and also $R_oB^{-\mathrm{T}}=B^{-\mathrm{T}}R'$, $R'\in \mathrm{GL}_n(\mathbb{Z})$. The statement of the proposition means that there is a basis $B'$ for $\mathcal{L}^*$ such that $R_oB'=B'R$. It holds if we take $B'=B^{-\mathrm{T}}L$ where $L= (R')^{-1}R \in \mathrm{GL}_n(\mathbb{Z})$.
\end{proof}

Proposition~\ref{prop:dualsubl} follows. Recalling Theorem~\ref{thm:bij}, Proposition~\ref{prop:coinc} follows.

\section*{Acknowledgements}
The author is grateful to Oleg Karpenkov for helpful advice.

\addcontentsline{toc}{section}{\refname}%
\bibliography{lattice}

\end{document}